\numberwithin{equation}{section}
\newtheorem{thm}{Theorem}
\newtheorem*{thm*}{Theorem}
\newtheorem*{prop*}{Proposition}
\newtheorem{question}[thm]{Question}
\newtheorem{lemma}[thm]{Lemma}
\newtheorem*{conj*}{Conjecture}
\theoremstyle{definition}
\newtheorem{example}[thm]{Example}
\newtheorem*{remark*}{Remark}
\newtheorem*{remarks*}{Remarks}
\newcommand{\Aut}{\operatorname{Aut}}
\newcommand{\Der}{\operatorname{Der}}
\newcommand{\ad}{\operatorname{ad}}
\newcommand{\Ad}{\operatorname{Ad}}
\newcommand{\mf}[1]{\mathfrak{#1}}
\newcommand{\bb}[1]{\mathbb{#1}}
\newcommand{\wt}[1]{\widetilde{#1}}
\DeclareMathOperator{\Isom}{Isom}
\DeclareMathOperator{\Ker}{Ker}
\begin{document}

\title{semi-simple Lie algebras are determined by their Iwasawa subalgebras}
\author{Jonathan Epstein and  Michael Jablonski}

\thanks{MSC2020: 
53C25, 53C30, 22E25, 22E46\\.}
\date{January 18, 2024}
\maketitle


\begin{abstract}  
Using tools from the geometry of Einstein solvmanifolds, we give a geometric argument that a semi-simple Lie algebra (of non-compact type) is completely determined by its Iwasawa subalgebra.  Furthermore, we produce an algebraic procedure for recovering the semi-simple (of non-compact type) from its Iwasawa subalgebra. 
\end{abstract}

Given a (real) semi-simple Lie algebra $\mathfrak g$, one may consider an Iwasawa decomposition $\mathfrak g = \mathfrak k + \mathfrak a + \mathfrak n$.  The  subalgebra  $\mathfrak s = \mathfrak a + \mathfrak n$ of $\mathfrak g$ is called an Iwasawa subalgebra and depends on  1) a Cartan decomposition and 2) a choice of positive roots of the $\ad \mathfrak a$ action on $\mathfrak g$.  Any two Iwasawa subalgebras are conjugate.  To what extent does this subalgebra determine the whole semi-simple Lie algebra?

\begin{thm}\label{thm: iwasawa determines its semi-simple}
Consider two, real semi-simple Lie algebras $\mathfrak g_1$ and $\mathfrak g_2$ of non-compact type with corresponding Iwasawa subalgebras $\mathfrak s_1$ and $\mathfrak s_2$.  If $\mathfrak s_1$ and $\mathfrak s_2$ are isomorphic, then $\mathfrak g_1$ is isomorphic to $\mathfrak g_2$. Moreover, every isomorphism between $\mf{s}_1$ and  $\mf{s}_2$ is the restriction of an isomorphism between $\mf{g}_1$ and $\mf{g}_2$.
\end{thm}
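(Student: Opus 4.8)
\textbf{The plan} is to run everything through the geometry of Einstein solvmanifolds. First I would realize each $\mf{s}_i$ geometrically: pick a connected Lie group $G_i$ with Lie algebra $\mf{g}_i$, a maximal compact $K_i$, and the Iwasawa subgroup $S_i$ with $\Lie(S_i)=\mf{s}_i$, so that $S_i$ acts simply transitively by isometries on the symmetric space $X_i=G_i/K_i$ for \emph{any} $G_i$-invariant metric. Since $X_i$ is a product of irreducible symmetric spaces of noncompact type, I would rescale the de Rham factors so that the resulting invariant metric is Einstein with negative scalar curvature; pulling it back along the orbit map $s\mapsto s\cdot eK_i$ produces a left-invariant Einstein metric $\langle\,,\rangle_i$ on $S_i$, normalized to scalar curvature $-1$, together with an isometry $(S_i,\langle\,,\rangle_i)\cong X_i$ intertwining left translations on $S_i$ with the $S_i$-action on $X_i\subseteq\Isom(X_i)$. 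In particular $\mf{g}_i$ is recovered as the Lie algebra of Killing fields of $X_i$, and $\mf{s}_i$ sits inside it as the right-invariant vector fields. Now, given an isomorphism $\phi\colon\mf{s}_1\to\mf{s}_2$, I would integrate it to a Lie group isomorphism $\Phi\colon S_1\to S_2$; then $\Phi^*\langle\,,\rangle_2$ is another left-invariant Einstein metric of scalar curvature $-1$ on $S_1$, so by the uniqueness theorem for Einstein solvmanifolds (Heber, together with Lauret) it is isometric to $(S_1,\langle\,,\rangle_1)$, and combining this with the conjugacy of simply transitive solvable subgroups of the isometry group of a solvmanifold (Gordon--Wilson) yields $\Phi^*\langle\,,\rangle_2=\beta^*\langle\,,\rangle_1$ for some $\beta\in\Aut(S_1)$. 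Hence $F:=\Phi\circ\beta^{-1}\colon(S_1,\langle\,,\rangle_1)\to(S_2,\langle\,,\rangle_2)$ is an isometry of symmetric spaces, conjugation by $F$ carries $\Isom(X_1)$ onto $\Isom(X_2)$, and this induces an isomorphism $\mf{g}_1\cong\mf{g}_2$. That already settles the first assertion.

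To upgrade this to the ``moreover'' I would observe that the restriction of $F_*\colon\mf{g}_1\to\mf{g}_2$ to the right-invariant fields is $\phi\circ(d\beta_e)^{-1}$, not $\phi$ itself, so the discrepancy is the automorphism $d\beta_e\in\Aut(\mf{s}_1)$. Using that any two Iwasawa subalgebras of a semisimple Lie algebra are conjugate by an inner automorphism, the full statement reduces to the following assertion: \emph{every automorphism of an Iwasawa subalgebra $\mf{s}$ of a semisimple Lie algebra $\mf{g}$ of noncompact type is the restriction of an automorphism of $\mf{g}$}. Two kinds of automorphisms extend for soft reasons, and I would treat these first. Inner automorphisms extend because for $s$ in the Iwasawa subgroup $S\subseteq G$ the operator $\Ad_{\mf{g}}(s)$ preserves $\mf{s}$ (as $S$ normalizes it) and restricts to $\Ad_{\mf{s}}(s)$. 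Automorphisms preserving the Einstein inner product extend because integrating such an automorphism gives an element of $\Aut(S)$ that is an isometry of $(S,\langle\,,\rangle)\cong X$ fixing the base point, hence an element of $\Isom(X)$, and $\Isom(X)$ acts on the Killing algebra $\mf{g}$ by pushforward of vector fields, restricting on $\mf{s}$ to the original automorphism. Since extensions compose, it then suffices to show that $\Aut(\mf{s})$ is generated by its inner automorphisms and its isometric ones, equivalently that the inner automorphism group already acts transitively on the space of left-invariant Einstein metrics of scalar curvature $-1$ on $S$.

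\textbf{The hard part} will be precisely this last transitivity statement: it needs genuine control of $\Aut(\mf{s})$ rather than a soft geometric argument. The route I would take is to put $\Aut(\mf{s})$ in normal form --- modulo inner automorphisms every automorphism preserves a fixed maximal split torus $\mf{a}$, is then governed by a symmetry of the restricted root system that preserves the positive system and the multiplicities, and is further pinned down on the root spaces by $M=Z_K(\mf{a})$ and by $\Ad(S)$ --- and to check that each of these ingredients is visibly realized inside $\Aut(\mf{g})$. A cleaner alternative for the ``moreover'', and the one foreshadowed by the abstract, is the explicit algebraic reconstruction: from $\mf{s}$ alone one canonically recovers $\mf{n}=[\mf{s},\mf{s}]$, a Cartan subalgebra $\mf{a}$, the positive restricted roots with their multiplicities, and then in turn the opposite nilradical, the centralizer $\mf{m}$, and all the brackets of $\mf{g}$; because any isomorphism $\mf{s}_1\to\mf{s}_2$ automatically transports all of this data, it extends to an isomorphism $\mf{g}_1\to\mf{g}_2$ with functoriality built in, bypassing the automorphism bookkeeping entirely.
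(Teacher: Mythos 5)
Your first paragraph is essentially the paper's own proof of the first assertion: realize $S_i$ as a simply transitive solvable group of isometries of the Einstein symmetric space $G_i/K_i$, transport the metric, invoke uniqueness of left-invariant Einstein metrics up to scaling and automorphism (Heber, Lauret) to turn the lifted isomorphism $\Phi$ into an isometry, and read off $\mf{g}_1\cong\mf{g}_2$ from the isometry groups. That part is fine and matches the paper.

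The ``moreover'' is where the gap is, and it is a real one. Your Route A reduces the statement (correctly) to showing that every automorphism of $\mf{s}$ is the restriction of an automorphism of $\mf{g}$, and then proposes to verify this by showing $\Aut(\mf{s})$ is generated by inner automorphisms together with automorphisms that are isometries of the Einstein metric. You flag this transitivity as ``the hard part,'' but it is not merely hard --- it is false. Take $\mf{s}=\mathbb R\ltimes\mathbb R^n$, the Iwasawa subalgebra of $\mathfrak{so}(n,1)$ with $n\ge 2$ (the example in the paper's own remark): inner automorphisms restrict on the nilradical $\mathbb R^n$ to positive scalars, the isometric automorphisms restrict to $O(n)$, so together they generate only the conformal group $\mathbb R_{>0}\cdot O(n)$, whereas $\Aut(\mf{s})$ contains all of $GL(n,\mathbb R)$ acting on $\mathbb R^n$. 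So Route A cannot close. Your Route B --- algebraic reconstruction of $\mf{g}$ from $\mf{s}$ --- is the strategy the paper actually follows, but the sentence ``one canonically recovers \dots the centralizer $\mf{m}$'' is precisely the step that carries all the content: $\mf{m}$ is not visible inside $\mf{s}$, and recovering it functorially is the paper's new Lemma \ref{lem: compact derivations of iwasawa} (every maximal compact subalgebra of $\Der(\mf{s})$ is $\ad_{\mf g}\mf{m}$ for a suitable choice of $\mf{k}$ and $\mf{a}$, proved by forcing a compact group of automorphisms into the isotropy of an Einstein metric). Even granting that, the paper still needs a second nontrivial step you omit: having transported $\mf{m}_1\ltimes\mf{s}_1$ to $\mf{m}_2\ltimes\mf{s}_2$, one complexifies, applies the Isomorphism Theorem for complex semi-simple Lie algebras to the Cartan subalgebra $\mf{t}\oplus\mf{a}$ and the simple root vectors to build $\wt\varphi:\mf{g}_1(\bb C)\to\mf{g}_2(\bb C)$, checks it agrees with $\varphi$ on $\mf{s}_1$, and then uses the uniqueness clause of that theorem against the real structures $\sigma_i$ to see that $\wt\varphi$ restricts to $\mf{g}_1\to\mf{g}_2$. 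None of this is routine bookkeeping, and as written your proposal establishes only the first assertion of the theorem.
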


Recall, every semi-simple Lie algebra can be written as a Lie algebra direct sum of (commuting) simple Lie algebras.  If none of these simple factors is compact, then the semi-simple Lie algebra is said to be of non-compact type.

One could deduce the first part of the above theorem from the classification of real semi-simple Lie algebras and symmetric spaces. However, we are not aware of this result in the literature and, besides, this is not a desirable proof. Instead, we exploit well-known results on the geometry of solvmanifolds to avoid using the classification of symmetric spaces.  The second part of the theorem does require a new tool, namely, Lemma \ref{lem: compact derivations of iwasawa}.

If the semi-simple algebra at hand were complex, then one could prove the above theorem without appealing to a classification of  simple Lie algebras by reconstructing the semi-simple Lie algebra from the Borel subalgebra.  However, this approach only works in the real setting when the real semi-simple is split.  In the non-split case, the abelian subalgebra $\mathfrak a$ is not a maximal abelian subalgebra of $\mathfrak g$.

Given that the Iwasawa subalgebra completely determines the semi-simple (in the setting of non-compact type), it is natural to ask if one can rebuild the semi-simple using data from the Iwasawa subalgebra.  Indeed, this is the case.  Recall, given a maximal, fully non-compact, abelian subalgebra $\mathfrak a$ of $\mathfrak g$, we may decompose $\mathfrak g$ as a direct sum of restricted root spaces for the action of $\ad \mathfrak a$ on $\mathfrak g$, i.e.
    \[\mathfrak g = \mathfrak g_0 \oplus \bigoplus_{\lambda\in\Sigma}\mathfrak g_\lambda,\]
where $\Sigma$ is the set of restricted real roots.  The centralizer $\mathfrak g_0$ of $\mathfrak a$ decomposes as $\mathfrak g_0 = \mathfrak m \oplus \mathfrak a$, where $\mathfrak m$ is a compact subalgebra.  Choosing an ordering of our restricted roots, we have a notion of positivity and the Iwasawa subalegbra is 
    \[\mathfrak s = \mathfrak a \oplus \bigoplus_{\lambda \in \Sigma ^+} \mathfrak g_\lambda.\]
In order to rebuild $\mathfrak g$ from $\mathfrak s$, the primary technical challenge is recovering $\mathfrak m$.   
We are unaware of the following result in the literature.

\begin{lemma}\label{lem: compact derivations of iwasawa}  Consider an Iwasawa subalgebra $\mathfrak s$ of $\mathfrak g$, a semi-simple Lie algebra of non-compact type.  Every maximal compact subalgebra of $\Der (\mathfrak s)$ is of the form $\ad_\mathfrak g \mathfrak m$, where $\mathfrak m = Z_\mathfrak k (\mathfrak a)$ for some maximal compact subalgebra $\mathfrak k$ of $\mathfrak g$.
\end{lemma}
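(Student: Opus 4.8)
The plan is to identify the maximal compact subalgebras of $\Der(\mathfrak s)$ with the algebras of skew derivations relative to a symmetric-space metric on the solvmanifold $S$. First some bookkeeping. Since $\mathfrak n=\nilrad(\mathfrak s)=[\mathfrak s,\mathfrak s]$ is a characteristic ideal, every derivation preserves it; and since the restricted roots span $\mathfrak a^{*}$ one checks that $Z_{\mathfrak g}(\mathfrak s)=0$ and that $N_{\mathfrak g}(\mathfrak s)$ is the minimal parabolic $\mathfrak q=\mathfrak m\oplus\mathfrak a\oplus\mathfrak n$. Hence $\ad_{\mathfrak g}$ embeds $\mathfrak q$ as a subalgebra of $\Der(\mathfrak s)$ and, in particular, $\ad_{\mathfrak g}\mathfrak m\cong\mathfrak m$ is a compact subalgebra of $\Der(\mathfrak s)$. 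Fix a Cartan decomposition $\mathfrak g=\mathfrak k\oplus\mathfrak p$ compatible with $\mathfrak s$, with Cartan involution $\theta$, and equip $\mathfrak s$ with the inner product $\langle\cdot,\cdot\rangle$ making the linear isomorphism $\phi_{0}\colon\mathfrak s\to\mathfrak p$, $X\mapsto\tfrac12(X-\theta X)$, an isometry onto $(\mathfrak p,B|_{\mathfrak p})$; thus $(S,\langle\cdot,\cdot\rangle)$ is the symmetric space $G/K$, $\phi_{0}$ restricts to the identity on $\mathfrak a$, and $\mathfrak a\perp\mathfrak n$. Because $B$ is $\ad$-invariant and $\ad Y$ commutes with $\theta$ for $Y\in\mathfrak k$, the subalgebra $\ad_{\mathfrak g}\mathfrak m$ consists of $\langle\cdot,\cdot\rangle$-skew derivations.

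The crux is the reverse inclusion $\Der(\mathfrak s)\cap\mathfrak{so}(\mathfrak s,\langle\cdot,\cdot\rangle)\subseteq\ad_{\mathfrak g}\mathfrak m$. Let $D$ be a $\langle\cdot,\cdot\rangle$-skew derivation. Being skew and preserving $\mathfrak n$, it preserves $\mathfrak a=\mathfrak n^{\perp}$, and a short computation with the relations $[H,X_{\lambda}]=\lambda(H)X_{\lambda}$ (again using that the roots span $\mathfrak a^{*}$) forces $D|_{\mathfrak a}=0$. Now $\exp(tD)$ is a one-parameter group of orthogonal automorphisms of $(\mathfrak s,\langle\cdot,\cdot\rangle)$, hence a one-parameter group of isometries of $G/K$ fixing the base point; since $\Isom(G/K)^{0}=\Ad(G)$ with isotropy $\Ad(K)$, there is a $Y\in\mathfrak k$ with $\phi_{0}\circ\exp(tD)\circ\phi_{0}^{-1}=\exp\!\big(t\,\ad Y|_{\mathfrak p}\big)$, so $D=\phi_{0}^{-1}\circ(\ad Y|_{\mathfrak p})\circ\phi_{0}$. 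Comparing on $\mathfrak a$, where $\phi_{0}$ is the identity and $D$ vanishes, gives $\ad Y|_{\mathfrak a}=0$, i.e. $Y\in Z_{\mathfrak k}(\mathfrak a)=\mathfrak m$; and for $Y\in\mathfrak m\subseteq\mathfrak g_{0}$ the operator $\ad Y$ preserves every restricted root space and commutes with $\theta$, hence with $\phi_{0}$, so in fact $D=\ad_{\mathfrak g}Y|_{\mathfrak s}$. Therefore $\ad_{\mathfrak g}\mathfrak m=\Der(\mathfrak s)\cap\mathfrak{so}(\mathfrak s,\langle\cdot,\cdot\rangle)$.

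Next I claim this is a maximal compact subalgebra of $\Der(\mathfrak s)$. This is the point at which the geometry of Einstein solvmanifolds is genuinely needed: $(S,\langle\cdot,\cdot\rangle)$ being a symmetric, in particular Einstein, solvmanifold, the metric is distinguished within its scaling- and automorphism-class, so that the orthogonal automorphism group $\Aut(\mathfrak s)\cap O(\mathfrak s,\langle\cdot,\cdot\rangle)$ is a maximal compact subgroup of $\Aut(\mathfrak s)^{0}$ --- equivalently, a maximal fully reducible subalgebra of $\Der(\mathfrak s)$ splits, relative to $\langle\cdot,\cdot\rangle$, as $\ad_{\mathfrak g}\mathfrak m$ together with a space of symmetric, hence $\mathbb{R}$-split, derivations containing $\ad_{\mathfrak g}\mathfrak a$. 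Either way the compact subalgebra $\ad_{\mathfrak g}\mathfrak m$ cannot be enlarged. I expect this maximality statement, rather than the explicit identification above, to be the real obstacle, and the place where one must invoke the structure theory of Einstein (minimal) left-invariant metrics.

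Finally, every maximal compact subalgebra $\mathfrak c$ of $\Der(\mathfrak s)$ is of the asserted form. By Mostow's conjugacy theorem the maximal compact subalgebras of $\Der(\mathfrak s)$ form a single orbit under $\Aut(\mathfrak s)^{0}$, so $\mathfrak c=\varphi(\ad_{\mathfrak g}\mathfrak m)\varphi^{-1}$ for some $\varphi\in\Aut(\mathfrak s)^{0}$. Pushing the symmetric metric forward by the Lie group automorphism $\varphi$ yields a left-invariant metric $\varphi_{*}\langle\cdot,\cdot\rangle$ for which $\varphi$ is an isometry, so $(S,\varphi_{*}\langle\cdot,\cdot\rangle)$ is again a symmetric space of non-compact type, realizing $\mathfrak s$ as an Iwasawa subalgebra of its isometry algebra $\mathfrak g'\cong\mathfrak g$ with the same nilradical $\mathfrak n$, some maximal split torus $\mathfrak a'$, and some maximal compact $\mathfrak k'$. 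Running the argument of the previous two paragraphs for this metric identifies $\Der(\mathfrak s)\cap\mathfrak{so}(\mathfrak s,\varphi_{*}\langle\cdot,\cdot\rangle)$, which contains $\mathfrak c$ and is maximal compact, with $\ad_{\mathfrak g}\big(Z_{\mathfrak k'}(\mathfrak a')\big)$; comparing dimensions gives $\mathfrak c=\ad_{\mathfrak g}\mathfrak m'$ with $\mathfrak m'=Z_{\mathfrak k'}(\mathfrak a')$. In passing this also recovers the semi-simple algebra together with a Cartan involution from $\mathfrak s$ and a choice of maximal compact subalgebra of $\Der(\mathfrak s)$, as the paper's reconstruction strategy requires.
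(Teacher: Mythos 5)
Your first two paragraphs are sound, and in fact prove something slightly sharper than the paper does (the exact identification $\ad_{\mathfrak g}\mathfrak m=\Der(\mathfrak s)\cap\mathfrak{so}(\mathfrak s,\langle\cdot,\cdot\rangle)$; the computation forcing $D|_{\mathfrak a}=0$ and the identification of the isotropy representation are both correct). But the maximality claim in your third paragraph --- which you yourself flag as ``the real obstacle'' --- is a genuine gap, and it is exactly the point the whole lemma turns on. Knowing that left-invariant Einstein metrics are unique up to scaling and automorphism does \emph{not} by itself show that $\Aut(\mathfrak s)\cap O(\mathfrak s,\langle\cdot,\cdot\rangle)$ is maximal compact: if $C$ is a compact group of automorphisms strictly containing it, averaging only gives you a $C$-invariant inner product, with no reason for that metric to be Einstein, so the uniqueness theorem never gets to act. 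The missing ingredient is precisely the paper's Lemma \ref{lem: Compact aut in isometry of Einstein} (extracted from the real GIT argument in \cite{Jablo:ConceringExistenceOfEinstein}): \emph{every} compact subgroup of $\Aut(S)$ preserves \emph{some} left-invariant Einstein metric. Only with that in hand can you say such a $C$ preserves $c\,\psi^*\langle\cdot,\cdot\rangle$ for some $\psi\in\Aut(S)$, hence is conjugate into $\Aut(\mathfrak s)\cap O(\mathfrak s,\langle\cdot,\cdot\rangle)$, and then conclude maximality by the standard containment-of-conjugates argument. Without it your paragraph three is an assertion, not a proof.

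Two further remarks. First, the paper's route is structured differently and avoids needing your explicit skew-derivation identification: it starts from an arbitrary maximal compact $\mathfrak h\subset\Der(\mathfrak s)$, uses Lemma \ref{lem: Compact aut in isometry of Einstein} to place the corresponding group inside the isotropy $\Ad K$ of a symmetric metric, and then uses the fact that $\mathfrak h$ preserves some maximal reductive $\mathfrak a\subset\mathfrak s$ to land in $\ad N_{\mathfrak g}(\mathfrak a)=\ad(\mathfrak a\oplus\mathfrak m)$, whence $\mathfrak h\subseteq\ad\mathfrak m$ by compactness and $\mathfrak h=\ad\mathfrak m$ by maximality. This produces the required $\mathfrak k$ and $\mathfrak m$ directly inside the fixed ambient $\mathfrak g$ for each $\mathfrak h$. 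Second, your final paragraph has a smaller issue of the same flavor: conjugating by $\varphi\in\Aut(\mathfrak s)$ hands you $\mathfrak m'=Z_{\mathfrak k'}(\mathfrak a')$ sitting inside the isometry algebra $\mathfrak g'$ of the pushed-forward metric, which is an \emph{isomorphic copy} of $\mathfrak g$ containing $\mathfrak s$; identifying it with a subalgebra of the original $\mathfrak g$ (as the statement requires, since $\ad_{\mathfrak g}$ refers to the fixed $\mathfrak g$) needs a further word --- the paper sidesteps this by invoking conjugacy of the maximal reductive subalgebras of $\mathfrak s$ \cite{Mostow:FullyReducibleSubgrpsOfAlgGrps} entirely inside the fixed $\mathfrak g$.
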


After constructing $\mathfrak g_0$ using $\mathfrak m$, one can continue to rebuild the rest of $\mathfrak g$.  These details can be found in Section \ref{sec: construction}.

\begin{remark*}  Interestingly, one cannot extend  Lemma \ref{lem: compact derivations of iwasawa} on compact derivations to (non-compact) reductive derivations.  For example, the Iwasawa subalgebra of $\mathfrak{so}(n,1)$ is $\mathbb R\ltimes \mathbb R^n$, where $\mathbb R$ acts by multiples of the identity.  Since $\mathfrak{gl}(n,\mathbb R)$ commutes with multiples of the identity on $\mathbb R^n$, it can be realized as a subalgebra of derivations of $\mathbb R\ltimes \mathbb R^n$.
\end{remark*}

Finally, one might investigate to what extent the above results extend to the nilradical of the Iwasawa.  Here the real and complex settings diverge.  We address this question in Section \ref{sec: nilradical}.

\section{Proof of Theorem \ref{thm: iwasawa determines its semi-simple}}

For each $i=1,2$, let $G_i$ be the adjoint group with Lie algebra $\mathfrak g_i$.  Let $S_i$ be the subgroup of $G_i$ with Lie algebra $\mathfrak s_i$.  Let $K_i$ be the subgroup of $G_i$ with Lie algebra $\mathfrak k_i$.  Recall, $K_i$ is a maximal compact subgroup of $G_i$ and consider the homogeneous spaces $G_i/K_i$.  Being the adjoint group, $G_i$ has no center and so  acts effectively on $G_i/K_i$.

As is well-known,  each of the   homogeneous spaces $G_i/K_i$ admits a   $G_i$-invariant Einstein metric  $g_i$.  These homogeneous spaces are, in fact, symmetric spaces and so we have  
	$$\Isom_0(G_i/K_i, g_i) = G_i,$$
see Theorem V.4.1 of \cite{Helgason}.  The theory of symmetric spaces is quite robust and actually more than we need.  For our purposes, it is enough to know that the $G_i$ are semi-simple of non-compact type, then we could employ \cite[Theorem 4.1]{Gordon:RiemannianIsometryGroupsContainingTransitiveReductiveSubgroups}.

As $S_1$ acts simply transitively on $G_1/K_1$, the metric $g_1$ induces a left-invariant metric on $S_1$.  Likewise, we obtain a left-invariant Einstein metric  on $S_2$.

Any isomorphism $\phi: \mathfrak s_1 \to\mathfrak s_2$ lifts to an isomorphism
	$$\phi: S_1 \to S_2,$$
as the $S_i$ are simply-connected; we may   pull-back the left-invariant Einstein metric $g_2$ on $S_2$ to a left-invariant Einstein metric $\phi^*g_2$ on $S_1$.  However, left-invariant Einstein metrics on a given solvmanifold are unique up to scaling and automorphism \cite[Theorem 5.1]{Heber}, see also \cite{Lauret:EinsteinSolvmanifoldsAreStandard};   i.e. there exists an automorphism $\phi'$ of $S_1$ and $c>0$ such that
	$$\phi'{ }^*(\phi ^* g_2) = c  g_1.$$
As $\phi'{ }^*(\phi ^* g_2)  = (\phi\circ\phi')^*g_2$ and $\phi \circ\phi' : S_1 \to S_2$ is an isomorphism, we may simplify and replace $ \phi \circ\phi'$ with $\phi$.  Now the isometry $\phi$ between $cg_1$ and $g_2$ yields an isomorphism between their isometry groups.  Thence, we have  an isomorphism between the connected components of the identity for these groups.  These components are precisely $G_1$ and $G_2$ and the induced isomorphism between $\mathfrak g_1$ and $\mathfrak g_2$  gives the desired result.

The proof of the last statement of Theorem \ref{thm: iwasawa determines its semi-simple} is postponed to after the proof of Lemma \ref{lem: compact derivations of iwasawa} in Section \ref{sec: construction}.

\section{Construction of semi-simple of non-compact type from the Iwasawa}\label{sec: construction}
Consider a non-compact, simple Lie algebra $\mathfrak g$ with maximal split torus $\mathfrak a$.  Denoting the restricted real roots of $\mathfrak a$ by $\Sigma$, we have
    \[\mathfrak g = \mathfrak g_0 \oplus \bigoplus_{\lambda \in \Sigma} \mathfrak g_\lambda \]
where $\mathfrak g_0 = \mathfrak m \oplus \mathfrak a$ is the centralizer of $\mathfrak a$ in $\mathfrak g$.  The subalgebra $\mathfrak m$ is compact.  The Iwasawa subalgebra is $\mathfrak s = \mathfrak a \oplus \bigoplus_{\lambda \in \Sigma^+} \mathfrak g_\lambda$, for some choice of positive restricted roots $\Sigma^+$, and we  write $\mathfrak g_{\geq 0} = \mathfrak m \oplus \mathfrak s$, where $\mathfrak s$ is normalized by $\mathfrak m$.  For details, the reader may consult a standard text such as \cite{Onischik-Vinberg:LieGroupsAndAlgebraicGroups} or \cite{Helgason}.

First we remind the reader how one may recover $\mathfrak g$ from $\mathfrak g_{\geq 0}$.  Then we explain how to find $\mathfrak m$ from $\mathfrak s$; this second piece is our new contribution.

\subsection{Building $\mathfrak g$ from $\mathfrak g_{\geq 0}$.}\label{subsec: building g from g>=g0}
To a given real semi-simple Lie algebra $\mathfrak g$, one 
may consider the associated Satake diagram.  The Satake diagram is a decorated version of the Dykin diagram of the complexification $\mathfrak g(\mathbb C)$ of $\mathfrak g$ and it completely determines the real semi-simple Lie algebra at hand. For example, Figure \ref{fig:sample Satake} shows the Dynkin diagram for $E_6$, with decorations. Each vertex is colored black or white and one pair of vertices is connected with an arrow to generate the Satake diagram for the real form EIII.
\begin{figure}[!h]
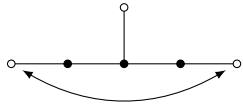

\centering
\dynkin[edge length=.75cm,
involutions={16}]{E}{oo***o}
\caption{The Satake diagram of EIII.}
\label{fig:sample Satake}
\end{figure}
Details for building the Satake diagram of $\mathfrak g$ can be found in \cite[Chapter 5, Section 3]{Onischik-Vinberg:LieGroupsAndAlgebraicGroups}.  We use all the information, and notation, in that work to explain how to recover the Satake diagram of $\mathfrak g$ from $\mathfrak g_{\geq 0} = \mathfrak m \oplus\mathfrak s$.

Let $\mathfrak t$ be a maximal torus of $\mathfrak m$.  The abelian subalgebra $\mathfrak t\oplus\mathfrak a$ complexifies to a Cartan subalgebra of $\mathfrak g(\mathbb C)$. Denoting the roots of $\mathfrak g(\mathbb C)$ by $\Delta$, one has a projection map 
    \[\rho: \Delta \to \Sigma\]
which arises by restricting the linear functions to $\mathfrak a$. Choose an ordering for the roots $\Delta$ in a way that extends positivity from $\Sigma$, i.e.  $\rho(\alpha)>0$ implies $\alpha >0$, for $\alpha\in\Delta$.  Now, for each $\lambda\in\Sigma$, we have 
    \[\mathfrak g_\lambda (\mathbb C) = \bigoplus_{ \{\alpha\in\Delta \ | \ \rho(\alpha)=\lambda\} } \mathfrak g_\alpha.\]
Let $\mf{m}(\mathbb C)^+$ be the sum of the positive root spaces of $\mf{m}(\mathbb C)$. Now we have 
    \[ \mathfrak m(\mathbb C)^+ \oplus \bigoplus_{\lambda > 0} \mathfrak g_\lambda (\mathbb C)\]
is the sum of all the positive root spaces for $\mathfrak g(\mathbb C)$.  Even further, together with the Cartan subalgebra $\mathfrak h = \mathfrak t(\mathbb C)\oplus\mathfrak a(\mathbb C)$, one has the full Borel subalgebra $\mathfrak b$ of $\mathfrak g(\mathbb C)$.  Notice, this subalgebra of $\mathfrak g(\mathbb C)$ is recoverable from $\mathfrak g_{\geq 0}$ alone.

The simple roots form the nodes of the Dynkin diagram of $\mathfrak g(\mathbb C)$.  To recover the edges of the Dynkin diagram, one needs the data contained in the Cartan matrix; this data is determined from the Killing form $B_{\mathfrak g(\mathbb C)}$ of $\mathfrak g(\mathbb C)$ restricted to the Cartan subalgebra $\mathfrak h$.  Denoting the Killing form of the Borel subalgebra by $B_\mathfrak b$, we have the following immediate relation
    \[B_{\mathfrak g(\mathbb C)} | _{\mathfrak h \times \mathfrak h} = \frac 1 2 B_{\mathfrak b} | _{\mathfrak h \times \mathfrak h}.\]
In this way, we can build the Dynkin diagram of $\mathfrak g(\mathbb C)$ using data from $\mathfrak g_{\geq 0}$.

To construct the Satake diagram of $\mathfrak g$, we need to decorate the Dynkin diagram of $\mathfrak g(\mathbb C)$ by coloring the nodes and adding arrows, where appropriate.

Denote the simple roots of $\mathfrak g(\mathbb C)$ above by $\Pi$.  We may write $\Pi = \Pi_1\cup \Pi_0$ where $\Pi_0=\Pi\cap \Ker \rho$.  (The projection $\rho(\Pi_1)$ turns out to be a set of simple roots for $\Sigma$.) 
The nodes corresponding to $\Pi_0$ are colored black; the nodes corresponding to $\Pi_1$ are colored white. These are the compact and non-compact simple roots, respecitvely.

Assigning possible arrows is done by studying each restricted root space $\mathfrak g_\lambda$, for $\lambda\in\rho(\Pi_1)$, to see if $\mathfrak g_\lambda(\mathbb C)$ decomposes into one or two root spaces under the action of $\mathfrak t\oplus \mathfrak a$.  If we have $\alpha_1,\alpha_2\in\Pi_1$ such that $\rho(\alpha_1)=\rho(\alpha_2)\in\Sigma$, then either $\alpha_1=\alpha_2$ or these are the only two such roots and we connect those nodes of the Dynkin diagram with an arrow to build the Satake diagram.

This completes the construction of the Satake diagram of $\mathfrak g$ using data coming from $\mathfrak g_{\geq 0} = \mathfrak m \oplus \mathfrak s$.

\subsection{Recovering $\mathfrak m$ from $\mathfrak s$.} \label{sec: recovering m from s}
To complete the argument that $\mathfrak g$ can be reconstructed from $\mathfrak s$, we need to recover $\mathfrak m$ from $\mathfrak s$. We now prove  Lemma \ref{lem: compact derivations of iwasawa}, which shows that this compact subalgebra can be realized as a maximal compact subalgebra of $\Der(\mathfrak s)$.  We start with the following lemma which is also used in \cite{GordonJablonski:EinsteinSolvmanifoldsHaveMaximalSymmetry}.

\begin{lemma}\label{lem: Compact aut in isometry of Einstein}  Let $S$ be a completely solvable Lie group admitting a left-invariant Einstein metric.  Let $H$ be a compact subgroup of $Aut(S)$. There exists some left-invariant Einstein metric $g$  such that $H\subset \Isom(S,g)$.
\end{lemma}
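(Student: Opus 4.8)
The goal is: given a completely solvable Lie group $S$ admitting a left-invariant Einstein metric and a compact subgroup $H \subset \operatorname{Aut}(S)$, find a left-invariant Einstein metric $g$ with $H \subset \operatorname{Isom}(S,g)$. Let me sketch how I'd approach this.

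The starting point: fix any left-invariant Einstein metric $g_0$ on $S$. The key structural fact about Einstein solvmanifolds (Heber, Lauret) is that they are "standard," and moreover all left-invariant Einstein metrics on a given $S$ are equivalent up to scaling and automorphism — this is precisely the uniqueness result already quoted in the excerpt (\cite[Theorem 5.1]{Heber}). So for each $h \in H$, the pulled-back metric $h^*g_0$ is again a left-invariant Einstein metric, hence differs from $g_0$ by scaling and automorphism. Since $H$ is compact and acts by automorphisms, after rescaling we can arrange that $H$ preserves the volume normalization, so actually $h^*g_0$ differs from $g_0$ only by an automorphism in the same scaling class — the set of such Einstein metrics, up to the scaling, forms a single $\operatorname{Aut}(S)$-orbit.

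The natural move is an averaging argument. Consider the space $\mathcal E$ of left-invariant Einstein metrics on $S$ with fixed volume normalization; this is (a quotient of) an $\operatorname{Aut}(S)$-orbit, so it carries a transitive $\operatorname{Aut}(S)$-action with stabilizer $\operatorname{Isom}(S,g_0) \cap \operatorname{Aut}(S)$ (a maximal compact subgroup of $\operatorname{Aut}(S)$, by the structure theory). One identifies $\mathcal E$ with a symmetric space of non-compact type (a quotient $\operatorname{Aut}(S)/K_{\operatorname{Aut}}$, or more concretely a space of positive-definite forms modulo the relevant group), which is nonpositively curved and complete. The compact group $H$ acts on $\mathcal E$ by isometries, and by the Cartan fixed-point theorem for compact groups acting on complete nonpositively curved spaces, $H$ has a fixed point $g \in \mathcal E$. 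That fixed metric $g$ is a left-invariant Einstein metric with $H \subseteq \operatorname{Isom}(S,g)$: indeed $h^*g = g$ for all $h \in H$ means each $h$ is an isometry of $g$.

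The main obstacle is setting up the space $\mathcal E$ with a genuinely $\operatorname{Aut}(S)$-invariant nonpositively curved (or convex) metric so that the fixed-point theorem applies — one must be careful that the scaling freedom and the automorphism freedom are disentangled correctly, and that the relevant stabilizer really is compact (so $\mathcal E$ is the right homogeneous space). This is exactly where the standardness/uniqueness theorems of Heber and Lauret do the heavy lifting: they guarantee that the moduli of left-invariant Einstein metrics on $S$ is a single orbit of the reductive group $\operatorname{Aut}(S)$ modulo scaling, hence carries the structure of a nonpositively curved symmetric space on which the averaging/fixed-point argument goes through. Once that framework is in place, invoking the Cartan fixed-point theorem for the compact group $H$ is routine.
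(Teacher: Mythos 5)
There is a genuine gap at the central step of your argument. Everything hinges on the space $\mathcal E$ of (normalized) left-invariant Einstein metrics being a complete nonpositively curved space on which $\Aut(S)$ acts by isometries, and you justify this by calling $\Aut(S)$ ``the reductive group $\Aut(S)$'' and declaring $\mathcal E \cong \Aut(S)/K_0$ a symmetric space of non-compact type. But for a solvable Lie group $S$ the automorphism group is essentially never reductive: for example, for the Iwasawa group of real hyperbolic space, $\mf s=\bb R A\ltimes \bb R^n$ with $\ad A=\Id$, one computes $\Der(\mf s)=\mf{gl}(n,\bb R)\ltimes \bb R^n$, which has a nontrivial nilradical (this is exactly the phenomenon behind the paper's remark following Lemma \ref{lem: compact derivations of iwasawa}). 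Consequently the $\Aut(\mf s)$-orbit of an Einstein inner product inside the honest symmetric space $GL(\mf s)/O(n)$ is neither totally geodesic nor convex, and a homogeneous space $G/K$ of a non-reductive $G$ by a compact $K$ carries no canonical invariant Hadamard structure. Your other formulation of the same step --- the parenthetical claim that $\Isom(S,g_0)\cap\Aut(S)$ is a maximal compact subgroup of $\Aut(S)$ ``by the structure theory'' --- is not an available input: granted Heber's uniqueness theorem and the Cartan--Iwasawa--Malcev conjugacy of maximal compacts, that claim is \emph{equivalent} to the lemma being proved, and in the literature it is the maximal-symmetry theorem of \cite{GordonJablonski:EinsteinSolvmanifoldsHaveMaximalSymmetry}, which (as the present paper notes) itself uses this lemma. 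So as written the argument is circular at exactly the point where all the work lies.

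For comparison, the paper does not argue through the moduli space at all; it simply invokes the first part of the proof of Theorem 4.1 of \cite{Jablo:ConceringExistenceOfEinstein}. That argument runs in the opposite direction from yours: one first averages to obtain an $H$-invariant inner product on $\mf s$ (this is the only place compactness of $H$ is used, and it is ordinary averaging, not a fixed-point theorem on $\mathcal E$), and then uses the variational/GIT characterization of Ricci soliton metrics --- the relevant functional and stratification are invariant under the compact group, so a critical point (a soliton, in particular the Einstein metric in the completely solvable case) can be located inside the $H$-fixed locus. Heber's uniqueness theorem is not needed for this lemma. If you want to salvage your approach, you would need to either prove directly that the isometric automorphisms of $g_0$ form a maximal compact subgroup of $\Aut(S)$, or exhibit an $\Aut(S)$-invariant convex structure on $\mathcal E$; neither follows from the uniqueness theorem alone.
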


\begin{proof}
This follows from the proof of Theorem 4.1 in \cite{Jablo:ConceringExistenceOfEinstein}.  There the solvable groups of interest are unimodular and the metrics are solvsolitons, but Einstein metrics are a special case of solvsolitons and  the first part of the proof there, which is all that is needed,  applies to all completely solvable groups, not just unimodular. 
\end{proof}

In the case of interest, i.e.\ when $S$ is an Iwasawa group, the Einstein metric from the lemma will be a symmetric metric.  It is quick to see that the isometry group does not change if one varies which symmetric metric is being used, even when one has a product of simple groups and the metric is not necessarily Einstein.

\begin{proof}[Proof of Lemma \ref{lem: compact derivations of iwasawa}]
Let $\mathfrak h$  be a maximal compact subalgebra $\mathfrak h$ of $\Der(\mathfrak s)$ and let $H$ be the corresponding compact group of $\Aut(\mathfrak s)$; recall, $S$ being simply-connected implies $\Aut(\mathfrak s) = \Aut(S)$.

Let $G$ be the adjoint group of $\mathfrak g$, then we have $G = \Isom_0(S,g)$ for a symmetric metric, as above.  Thus $S=G/K$ where $K$ is any maximal compact subgroup of $G$.  Choosing $K$ so that $e\in S$ corresponds to $eK \in G/K$, we have that the isotropy of $G$ at $e\in S$ is $\Ad\,K$.  Since  $H$ fixes $e\in S$, Lemma \ref{lem: Compact aut in isometry of Einstein} gives that $H$ is in the isotropy at this point and we have $H\subset \Ad\,K$.

So far, we have obtained 
	$$\mathfrak h\subset \ad\,\mathfrak k\subset \ad\,\mathfrak g,$$
where $\ad = \ad_\mathfrak g$ is the adjoint action relative to $\mathfrak g$.  
As $\mathfrak h$ normalizes $\mathfrak s$, it fixes some maximal reductive subalgebra $\mathfrak a$ of $\mathfrak s$, i.e.~$\mathfrak h \subset \ad\,N_\mathfrak g(\mathfrak a)$.  Recall, $\mathfrak s = \mathfrak a \oplus \mathfrak n$, where $\mathfrak n$ is the nilradical of $\mathfrak s$.    
It is well-known that there is some possibly different maximal compact $\mathfrak k$ of $\mathfrak g$ such that
	$$N_\mathfrak g (\mathfrak a) = \mathfrak a \oplus \mathfrak m,$$
where $\mathfrak m = Z_\mathfrak k (\mathfrak a)$, see Chapter IX of \cite{Helgason}.  As $\mathfrak h$ is a compact subalgebra and $\mathfrak a \oplus \mathfrak m$ is abelian with a unique maximal compact subalgebra $\mathfrak m$, we see that $\mathfrak h \subset \ad\,\mathfrak m$.  

The work above proves the lemma in the event that $\mathfrak h$ fixes a predetermined choice of maximal reductive $\mathfrak a$ of $\mathfrak s$.  However,  since any compact subalgebra of $\Der(\mathfrak s)$ always fixes some maximal reductive $\mathfrak a$ and the choice of $\mathfrak a$ is unique up to conjugation \cite{Mostow:FullyReducibleSubgrpsOfAlgGrps},   the claim follows.

\end{proof}

\begin{proof}[Proof of Theorem \ref{thm: iwasawa determines its semi-simple}, cont.] Let $\mf{s}_i \subset \mf{g}_i$, $i = 1,2$, be two Iwasawa subalgebras, and let $\varphi : \mf{s}_1 \to \mf{s}_2$ be an isomorphism. The isomorphism $\varphi$ induces an isomorphism $\varphi': \Der(\mf{s}_1) \to \Der(\mf{s}_2)$. Fix a maximal compact subalgebra of $\Der(\mf{s}_1)$ and identify it with $\mf{m}_1 \subset \mf{g}_1$ as in Lemma \ref{lem: compact derivations of iwasawa}.  Likewise, we identify $\varphi'(\mf m_1)$ with $\mf m_2\subset \mf g_2$. 
Using these identifications and the induced isomorphism $\varphi'$ of derivation algebras, we obtain an isomorphism $\mf{m}_1 \ltimes \mf{s}_1 \to \mf{m}_2 \ltimes \mf{s}_2$ which restricts to $\varphi$ on $\mf{s}_1$. Abusing notation, this isomorphism between the $\mathfrak m_i\ltimes \mathfrak s_i$ is also denoted by $\varphi$.

Let $\mf{t}_1 \subset \mf{m}_1$ be a maximal abelian subalgebra and consider the maximal abelian   $\mf{t}_2 = \varphi(\mf{t}_1)$ of $\mf m_2$.  The subalegbra $\mf{h}_i = \mf{t}_i \oplus \mf{a}_i$  is a Cartan subalgebra of $\mf{g}_i$, for each $i=1,2$, with $\mf h_2=\varphi (\mf h_1)$.  In this way,  the complexification
\begin{align*}
    \varphi(\bb{C}) : \mf{m}_1(\bb{C}) \ltimes \mf{s}_1(\bb{C}) \to \mf{m}_2(\bb{C}) \ltimes \mf{s}_2(\bb{C})
\end{align*}
carries the Cartan subalgebra $\mf{h}_1(\bb{C})$ isomorphically onto the Cartan subalgebra $\mf{h}_2(\bb{C})$. Moreover, since $\varphi(\bb{C})$ preserves brackets, its transpose carries the root system $\Delta_2 \subset \mf{h}_2(\bb{C})^*$  bijectively onto the root system of $\Delta_1 \subset \mf{h}_1(\bb{C})^*$. 

As in Section \ref{sec: construction}, $\mf{m}_i(\bb{C}) \ltimes \mf{s}_i(\bb{C})$ contains all the simple root spaces of $\mf{g}_i(\bb{C})$. Let $\{ E_1, \ldots, E_\ell \}$ be a set of simple root vectors of $\mf g_1(\bb C)$. By the Isomorphism Theorem (cf \cite{Knapp:LieGroupsBeyondAnIntroduction}, Theorem 2.108), there exists a unique Lie algebra isomorphism $\wt{\varphi}: \mf{g}_1(\bb{C}) \to \mf{g}_2(\bb{C})$ such that $\wt{\varphi}$ agrees with $\varphi(\bb{C})$ on $\mf{h}_1(\bb{C})$ and $\wt{\varphi}(E_j) = \varphi(\bb{C})(E_j)$. 

Next, we argue that $\wt \varphi$ agrees with $\varphi$ on $\mf s_1$.  Recall, $\mf s_1 = \mf a_1\oplus \mf n_1$, as a vector space.  By construction,  $\wt \varphi$ agrees with $\varphi(\mathbb C)$ on the subspace $\mf a_1\subset \mf h_1 (\bb C)$; furthermore, this is simply $\varphi$ on $\mf a_1$.  To see that $\wt \varphi$ and $\varphi$ agree on $\mf n_1$, it suffices to verify this on the simple restricted roots spaces as these are contained in $\mf n_1$, generate $\mf n_1$ as a Lie algebra, and $\wt \varphi$ and $\varphi$ are homomorphisms.

Let $X\in\mf n_1$ be a simple restricted root vector.  From the discussion in Section \ref{subsec: building g from g>=g0}, we have that $X=z_1E_1+\dots z_\ell E_\ell$ for some $z_i\in\bb C$.  Using $\bb C$-linearity of $\wt \varphi$ and $\varphi(\bb C)$, the definition of $\wt \varphi$, and that $\varphi(\bb C)$ agress with $\varphi$ on $\mathfrak s$, we have the following.
\begin{align*}
    \wt{\varphi}(X) &= z_1 \wt{\varphi}(E_1) + \cdots + z_\ell \wt{\varphi}(E_\ell) \\
    &= z_1 \varphi(\bb{C})(E_1) + \cdots + z_\ell \varphi(\bb{C})(E_\ell) \\
    &= \varphi(\bb{C}) \left( z_1 E_1 + \cdots + z_\ell E_\ell \right) \\
    &= \varphi(\bb{C}) \left( X \right) \\
    &= \varphi(X)
\end{align*}
This completes the proof that $\wt\varphi$ agress with $\varphi$ on $\mathfrak s_1$.

Lastly, we show that $\wt{\varphi}$ restricts to a real Lie algebra isomorphism of the $\mf{g}_i$. Let $\sigma_i:\mf{g}_i(\bb{C}) \to \mf{g}_i(\bb{C})$ be the anti-involution whose fixed set is $\mf{g}_i$. Then
\begin{align*}
    \sigma_2 \circ \wt{\varphi} \circ \sigma_1 : \mathfrak g_1(\mathbb C) \to \mathfrak g_2(\mathbb C)
\end{align*}
is a $\bb{C}$-linear isomorphism that agrees with $\wt{\varphi}$ on $\mf{m}_1(\bb{C}) \oplus \mf{a}_1(\bb{C})$ and on the simple root spaces.
By the uniqueness part of the Isomorphism Theorem, we must have $\sigma_2 \circ \wt{\varphi} \circ \sigma_1 = \wt{\varphi}$.  Then
\begin{align*}
    \wt{\varphi}(\mf{g}_1) = (\sigma_2\circ \wt{\varphi} \circ\sigma_1)(\mf{g}_1) = (\sigma_2\circ \wt{\varphi}) (\mf{g}_1) = \sigma_2(\wt{\varphi}(\mf{g}_1))
\end{align*}
shows that $\sigma_2$ fixes $\wt{\varphi}(\mf{g}_1)$ and hence $\wt{\varphi}(\mf{g}_1) = \mf{g}_2$. In this way, the isomorphism $\varphi:\mf s_1\to\mf s_2$ is the restriction of the isomorphism $\wt \varphi \big|_{\mf g_1} :\mf g_1\to \mf g_2$.

\end{proof}

\section{Nilradical}\label{sec: nilradical}
If $\mathfrak g$ is a complex semi-simple Lie algebra, one can recover the Borel subalgebra $\mathfrak b$ from its nilradical $\mathfrak n$, and hence all of $\mathfrak g$ from this nilpotent subalgebra.  To rebuild $\mathfrak b$ from $\mathfrak n$, one needs a Cartan subalgebra $\mathfrak h$ of $\mathfrak g$.

We will show how the Cartan subalgebra can be chosen to be a maximal abelian, reductive subalgebra of $\Der(\mathfrak n)$.  Fix a Cartan subalgebra $\mathfrak h$ of $\mathfrak g$.  As this subalgebra acts faithfully on $\mathfrak n$, we may view it as a subalgebra of some maximal abelian, reductive subalgebra of $\Der(\mathfrak n)$, denoted by $\mathfrak l$.  By choice, we have $\mathfrak h \subset \mathfrak l$.

A derivation of $\mathfrak n$ is completely determined by its values on any complement of $[\mathfrak n,\mathfrak n]$ in $\mathfrak n$.  In terms of the roots of $\mathfrak g$, we may choose the complement of $[\mathfrak n,\mathfrak n]$ to be $\bigoplus_{\alpha\in\Pi}\mathfrak g_\alpha$, where $\Pi$ is the set of simple roots.  Now, as these roots spaces are all 1-dimensional, we have that $\mathfrak l \subset \mathfrak{gl} (\mathfrak g_{\alpha_1}) \times \dots \times \mathfrak{gl} (\mathfrak g_{\alpha_k})$. But $\dim \mathfrak h = |\Pi| = k$ and so $ \mathfrak{gl} (\mathfrak g_{\alpha_1}) \times \dots \times \mathfrak{gl} (\mathfrak g_{\alpha_k}) \simeq \mathfrak h$.  Thus, $\mathfrak h = \mathfrak l$, as claimed.

In the same way, the Iwasawa of a a split real semi-simple Lie algebra can be recovered from its nilradical.  However, this is not true, in general, for real semi-simple Lie algebras.

\begin{example}
    The 3-dimensional Heisenberg Lie algebra is the nilradical of the Iwasawa subalgebra for both $\mathfrak{sl}(3,\mathbb R)$ and $\mathfrak{su}(2,1)$, which are non-isomorphic simple, real Lie algebras.
\end{example}

Given an Iwasawa subalgebra $\mathfrak s=\mathfrak a\oplus \mathfrak n$, of some non-compact simple Lie algebra, we recall that $\mathfrak a$ contains a special element $\varphi$, called the pre-Einstein derivation, such that the Lie group $S_0$, with Lie algebra $\mathfrak s_0=\mathbb R(\varphi)\ltimes \mathfrak n$, admits a left-invariant Einstein metric, cf. \cite{Nikolayevsky:EinsteinSolvmanifoldsandPreEinsteinDerivation}; see also \cite{Heber,Lauret:EinsteinSolvmanifoldsAreStandard}.  At the other extreme, one may consider a maximal fully non-compact, reductive subalgebra $\overline{\mathfrak a}$ of $\Der(\mathfrak n)$ containing $\mathfrak a$.  The Lie group $\overline S$, with Lie algebra $\overline{\mathfrak s} = \overline{\mathfrak a }\ltimes \mathfrak n$, also admits an Einstein metric, as $\overline{\mathfrak a}$ contains the pre-Einstein derivation $\varphi$.

\begin{question}
    Can one characterize the intermediate subalgebras between $\mathbb R(\varphi)$ and $\overline{\mathfrak a}$ whose corresponding intermediate Lie subgroups between $S_0$ and $\overline S$ are symmetric spaces?
\end{question}

\subsubsection*{Acknowledgments}  We would like to thank Victor Cortes for comments on an early version of this manuscript.  This work was supported in part by National Science Foundation grant DMS-1906351.

\providecommand{\bysame}{\leavevmode\hbox to3em{\hrulefill}\thinspace}
\providecommand{\MR}{\relax\ifhmode\unskip\space\fi MR }
\providecommand{\MRhref}[2]{%
  \href{http://www.ams.org/mathscinet-getitem?mr=#1}{#2}
}
\providecommand{\href}[2]{#2}

\end{document}